\documentclass[11pt,oneside]{amsart}

\usepackage[english]{babel}    
\usepackage[utf8]{inputenc}    
\usepackage[T1]{fontenc}       
\usepackage{amssymb} 		    
\usepackage{mathtools}         
\usepackage{icomma}            
\usepackage{units}             
\usepackage{enumerate}         
\usepackage{array}             
\usepackage[usenames]{color}   
\usepackage{graphicx}          

\usepackage{dsfont}
\usepackage{algorithm}
\usepackage{algpseudocode}

\linespread{1.2}


\usepackage[scale=.83]{geometry}

\usepackage[bookmarks, colorlinks=true, linkcolor=black, anchorcolor=black,
citecolor=black, filecolor=black, menucolor=black, runcolor=black,
urlcolor=black, pdfencoding=unicode]{hyperref}  

\newtheorem{thm}{Theorem}
\newtheorem{lemma}[thm]{Lemma}
\newtheorem{prop}[thm]{Propositon}
\newtheorem{cor}[thm]{Corollary}
\theoremstyle{remark}
\newtheorem{remark}[thm]{Remark}


\newcommand{\RR}{\ensuremath{\mathbb{R}}}

\renewcommand{\P}{\ensuremath{\mathbb{P}}}

\newcommand{\F}{\mathcal{F}}

\newenvironment{itemize*}{\vspace{-10pt}\begin{itemize}\setlength{\itemsep}{0pt}\setlength{\parskip}{2pt}}{\end{itemize}}
\newenvironment{enumerate*}{\vspace{-10pt}\begin{enumerate}\setlength{\itemsep}{0pt}\setlength{\parskip}{2pt}}{\end{enumerate}}
\newenvironment{description*}{\vspace{-12pt}\begin{description}\setlength{\itemsep}{0pt}\setlength{\parskip}{2pt}}{\end{description}}

\newcommand{\Sp}{\ensuremath{\mathbb{S}}}
\newcommand{\abs}[1]{\left|#1\right|}
\newcommand{\set}[2]{\left\{#1\ ; \ #2  \right\}}
\renewcommand{\d}{\mathrm{d}}

\newcommand{\e}[1]{\mathrm{e}_{#1}}

\title{A note on the exact simulation of spherical Brownian motion}

\author{Aleksandar Mijatovi{\'c}}
\address{Department of Statistics, University of Warwick, \& The Alan Turing Institute, UK}
\email{a.mijatovic@warwick.ac.uk}
\author{Veno Mramor}
\address{Department of Statistics, University of Warwick, \& The Alan Turing Institute, UK}
\email{veno.mramor@warwick.ac.uk}
\author{Ger{\'o}nimo Uribe Bravo}
\address{Instituto de Matematicas, Universidad Nacional Aut{\'o}noma de M{\'e}xico, M{\'e}xico }
\email{geronimo@matem.unam.mx}

\numberwithin{equation}{section}
\numberwithin{thm}{section}

\begin{document}

\begin{abstract}
We describe an exact simulation algorithm for the increments of Brownian motion on a sphere of arbitrary dimension,
based on the skew-product decomposition of the process with respect to the standard geodesic distance. 
The radial process is closely related to a Wright-Fisher diffusion, increments of which can be simulated exactly
using the recent work of Jenkins \& Span\`{o}~(2017)~\cite{exsimWF}. The rapid spinning phenomenon of the skew-product decomposition
then yields the algorithm for the increments of the process on the sphere. 

\smallskip
\noindent \textsc{Keywords.}
exact simulation; skew-product decomposition; spherical Brownian motion; Wright-Fisher diffusion
\end{abstract}

\maketitle

\section{Introduction}
Brownian motion (BM) can be viewed as a continuous time random walk with symmetric increments
and is as such of fundamental importance in physics and other natural sciences.
In applications
one is often interested in the BM on curved surfaces
and other manifolds, 
see e.g.~\cite{KrishnaFluor} and~\cite{BactLi1} for the modelling of  
the fluorescent marker molecules in cell membranes  and the motion of
bacteria or any other diffusing particles, respectively. 
Often  Monte Carlo simulation algorithms for such models on curved spaces are constructed using approximate tangent plane methods, 
which are accurate only for very small time steps, making the algorithms computationally expensive. 
Algorithms allowing simulation over longer time steps are hence of particular interest in the physics literature.  
For example,~\cite{sim4dimsphere} gives
a simulation algorithm for the BM on the three-dimensional sphere, based on its quaternionic structure. 
This algorithm is neither exact nor does it generalise easily to other dimensions. In~\cite{sim2dimsphere} 
it is applied to design an approximate simulation algorithm for the BM on the two-dimensional sphere.
A further approximate algorithm for the simulation of BM on the two-sphere is given in~\cite{gaussForSph}.
All the aforementioned algorithms are based on an approximation of the transition density of the BM on the relevant sphere.

In contrast, 
for any dimension $d\ge3$,
Algorithm~\ref{simSpB} 
simulates exactly the increments of the BM $Z$ on the 
sphere $\Sp^{d-1}:=\set{z\in \RR^{d}}{\abs{z}=1}$.  
It is based on two facts established in Section~\ref{moreRes}: the radial part of 
$Z$ (with respect to the standard metric on $\Sp^{d-1}$) can be transformed to a Wright-Fisher diffusions 
and, due to the rapid spinning of the skew-product decomposition of $Z$, its angular component is uniform on $\Sp^{d-2}$. 

\begin{algorithm}\caption{Simulation of the increment of Brownian motion  $Z$ on $\Sp^{d-1}$ over any time interval} \label{simSpB}
\begin{algorithmic}[1]
	\Require{Starting point $z\in\Sp^{d-1}$ and time horizon $t>0$} 
	\State Simulate the radial component: $X\sim \mathrm{WF}_{0,t}\left(\frac{d-1}{2},\frac{d-1}{2}\right)$ \Comment Algorithm~\ref{simWF} in Appendix~\ref{appendix:alg}
	\State Simulate the angular component: $Y$ uniform on $\Sp^{d-2}$  \label{step:uniform_sphere}                    
	\State Set $u:=(\e{d}-z)/\abs{\e{d}-z}$ and $O(z):=I-2uu^\top$ \label{stepOrt}
	\State \textbf{return} $O(z)(2\sqrt{X(1-X)}Y^\top,1-2X)^\top$
\end{algorithmic}
\end{algorithm}

The vectors in Algorithm~\ref{simSpB} are column vectors of appropriate dimension,
$\e{d}:=(0,\ldots,0,1)^\top\in\Sp^{d-1}$ denotes the north pole of the sphere and 
$O(z)$ is the reflection of $\RR^d$ across the hyperplane through the origin with the normal
$u$. 
Algorithm~\ref{simWF} 
is the exact simulation algorithm for the increment of the Wright-Fisher diffusion given in~\cite[Alg.~1]{exsimWF} 
(see Eq.~\eqref{def:WFdiff} below for  a definition of Wright-Fisher diffusions). 
Step~\ref{step:uniform_sphere} in Algorithm~\ref{simSpB} consists of simulating a vector $N$ in $\RR^{d-1}$
with independent standard normal components and setting $Y=N/\abs{N}$ (we denote the standard
Euclidean norm by $\abs{\cdot}$ throughout).

The key property of the orthogonal matrix $O(z)\in \RR^d\otimes\RR^d$ in Algorithm~\ref{simSpB}
is $O(z)\e{d}=z$. 
In fact, any orthogonal matrix in $\RR^d\otimes\RR^d$ with this property would lead to an exact sample 
from the increment of BM on $\Sp^{d-1}$.
Indeed, if 
$O_1(z),O_2(z)\in\RR^d\otimes\RR^d$ are two such orthogonal matrices,  then the product 
$O_1^{-1}(z)O_2(z)$
fixes 
$\e{d}$
and is given by an orthogonal transformation 
$\widetilde{O}(z)\in\RR^{d-1}\otimes\RR^{d-1}$ on the orthogonal complement $\{\e{d}\}^\perp$ in $\RR^d$. 
Hence
$O_2(z)(2\sqrt{X(1-X)}Y^\top,1-2X)^\top=O_1(z)(2\sqrt{X(1-X)}(\widetilde{O}(z)Y)^\top,1-2X)^\top$ 
implying
$$O_2(z)(2\sqrt{X(1-X)}Y^\top,1-2X)^\top\overset{d}{=}O_1(z)(2\sqrt{X(1-X)}Y^\top,1-2X)^\top,$$ 
where 
$\overset{d}{=}$ denotes equality in law. 
The formula for $O(z)$ in Algorithm~\ref{simSpB} is chosen due to
its simplicity.

Algorithm~\ref{simSpB} exploits the symmetry of both the geometry of $\Sp^{d-1}$ and the law of spherical BM
to reduce the simulation problem in any dimension to the simulation of an increment of a one-dimensional
diffusion. 
Unlike the simulation methods in~\cite{sim4dimsphere,sim2dimsphere,gaussForSph},
Algorithm~\ref{simSpB} 
depends on the dimension $d$ only through the value of the mutation parameters in the Wright-Fisher diffusion. 
As discussed in~\cite[Sec.~4]{exsimWF}, Algorithm~\ref{simWF}, which simulates exactly from the law of the increment of the
Wright-Fisher diffusion, is numerically stable for the time intervals of length  $t\geq0.05$. 
This is essentially because, 
when $t$ is very small, the modulus of the summands in the alternating series~\cite[Eq.~(5)]{exsimWF}
is increasing as a function of the index for large values of the index, before it starts to decrease monotonically. 
Conversely, Algorithm~\ref{simWF} (and hence Algorithm~\ref{simSpB}) becomes more efficient with increasing time horizon $t$ 
(for small $t$, a normal approximation can be used to obtain a fast approximate algorithm substituting Algorithm~\ref{simWF},
see~\cite[Thm~1]{exsimWF} and the comments therein). 
Dimension $d$ does not affect the performance of Algorithm~\ref{simSpB}, cf. Appendix~\ref{appendix:alg}.

Algorithm~\ref{simSpB} above 
and the Markov property of the BM $Z$ on the sphere $\Sp^{d-1}$ 
yield an exact sample from any finite-dimensional marginal of $Z$.  Moreover, as the
BM on $\Sp^1\subseteq \mathbb{C}$ can be represented as 
$Z=\exp(\mathrm{i} W)$, where $\mathrm{i}^2=-1$  and $W$ denotes a standard BM
on $\RR$, the assumption $d\geq3$ 
in Algorithm~\ref{simSpB} is not restrictive.

Our main technical result  
(Proposition~\ref{skewProd} below)
states that the spherical BM $Z$ 
enjoys a skew-product decomposition in which the last component of $Z$ is independent of
the normalized and suitably time-changed remaining components.
Its main application  in the present paper
consists of establishing the validity of Algorithm~\ref{simSpB}.
The proof of Proposition~\ref{skewProd} is analogous to that of~\cite[Thm~1.5]{projSphere}.
However, we note that Proposition~\ref{skewProd} is not 
a corollary of~\cite[Thm~1.5]{projSphere}, as the quoted theorem implies only 
the independence of the modulus of the last component (but not of its sign).  
This difference makes the claim of
Proposition~\ref{skewProd} more general than~\cite[Thm~1.5]{projSphere} and (we hope) of independent interest. 

The classical skew-product decomposition of Brownian motion $\zeta$ in $\RR^d$,
started away from the origin, features the BM $Z$ on $\Sp^{d-1}$ as follows:
$Z_t=\zeta_{C_t}/|\zeta_{C_t}|$, where the increasing process $(C_t)_{t\geq0}$
is the inverse of the additive functional 
$\int_0^\cdot |\zeta_u|^{-2}\d u$ of $\zeta$. Moreover, the process $Z$ and
$|\zeta|$ are independent of each other.  It is tempting to try to use the
above representation to obtain a sample from $Z_t$. One would first need to
simulate $C_t$ , then get a sample from the conditional law of $\zeta_{C_t}$,
given $C_t$, and finally normalise this sample to get an
$\Sp^{d-1}$-valued random element identically distributed as $Z_t$.  
We stress however that this does not yield an approximate simulation algorithm
for the increments of $Z$, let alone an exact one. Even if one is able to
simulate $C_t$, which is in itself non-trivial and can to the best of our knowledge
only be done approximately, the key obstacle is the fact that the process
$\zeta$ is not independent of the time $C_t$.  More precisely, since $C_t$
depends 
on the trajectory of $\zeta$ up to $C_t$, the conditional law of $\zeta_{C_t}$,
given $C_t$, is intractable, so the second step of the proposed sampling
procedure cannot be achieved.

Finally, we note that Algorithm~\ref{simSpB} yields an algorithm for the exact simulation of the increments of 
the BM on the real 
$\mathbb{R}\mathbb{P}^n$, complex $\mathbb{C}\mathbb{P}^n$ and quaternionic $\mathbb{H}\mathbb{P}^n$ projective spaces
(for any integer $n$). 
These are Riemannian manifolds of the (real) dimension $n$, $2n$ and $4n$, respectively, with canonical Riemannian metrics
described in Appendix~\ref{BMproj}. The Riemannian submersion $\pi$ (mapping $\Sp^n\to\mathbb{R}\mathbb{P}^n$,
$\Sp^{2n+1}\to\mathbb{C}\mathbb{P}^n$ and
$\Sp^{4n+3}\to\mathbb{H}\mathbb{P}^n$) by Lemma~\ref{lem:FPnBM} below projects the BM $Z$ on the relevant sphere 
to the BM $\pi(Z)$ on the projective space. Since $\pi(z)=[z]$ only converts the standard coordinates of the point $z$ on the sphere 
to the homogeneous coordinates $[z]$ in the projective space,  the random element 
$$\pi\left(O(z)(2\sqrt{X(1-X)}Y^\top,1-2X)^\top\right),$$
where matrix $O(z)$ and random variables $X,Y$ are the same as in Algorithm~\ref{simSpB}, gives the exact sample of the increment of the BM on the projective space, started at $[z]$, over the time horizon $t>0$.

\section{The skew-product decomposition and Algorithm~\ref{simSpB}} \label{moreRes}
Brownian motion $Z=(Z_t)_{t\geq0}$
on the sphere
$\Sp^{d-1}$ 
is a Feller process generated by the Laplace-Beltrami operator corresponding to the
Riemannian metric on 
$\Sp^{d-1}$  
induced by the ambient Euclidean space $\RR^d$, see~\cite{hsumanifolds}.
There are a number of  different ways of representing BM on a sphere.
The most useful for our purposes is
the Stroock representation of $Z$, given (in It\^o form) by SDE~\eqref{BMsphere} 
on $\RR^d$,
\begin{equation} \label{BMsphere}
\d Z_t=(I-Z_tZ_t^\top) \d B_t-\frac{d-1}{2}Z_t \d t,\qquad Z_0\in \Sp^{d-1}, 
\end{equation}
which possesses the unique strong solution,
where 
$B$ is a BM on $\mathbb{R}^{d}$ and $I$ denotes the identity matrix of appropriate dimension (cf. \cite[Ch.3, \S3, p.83]{hsumanifolds}). 
However, as~\eqref{BMsphere} and other representations of $Z$  alluded to above are non-constructive, 
we cannot use them directly for the exact simulation of the increments of $Z$ in dimension $d\geq3$. 

As explained in the introduction, 
the key idea is to identify the skew-product decomposition of $Z$
and exploit the rapid spinning phenomenon of the angular component at the starting point $Z_0$, together with
the symmetries of the sphere $\Sp^{d-1}$ and the law of the spherical BM $Z$ to obtain Algorithm~\ref{simSpB}.
Let $D=(D_t)_{t\ge 0}$ be the geodesic distance  $D_t:=\operatorname{dis}(Z_t)\in[0,\pi]$
between $Z_t$ and some fixed point $w\in \Sp^{d-1}$. 
In~\cite[p.~269]{ItoMcKean} and~\cite{PauRogSkew}, the authors show that $D$ satisfies the SDE 
$\d D_t=\d \beta_t+\frac{d-2}{2}\cot(D_t)\d t$, 
where $\beta$ is a scalar BM. 
Since $\operatorname{dis}(z)=\arccos(\langle z,w \rangle)$ for any $z,w\in\Sp^{d-1}$, the natural transformation
$\widetilde{\mathcal{X}}:=\cos(D)$
leads to the Jacobi diffusion (see e.g.~\cite{WarrenYorJacobi} 
and the proof of Proposition~\ref{skewProd} below)
satisfying 
the SDE 
\begin{equation} \label{eqn:tildeX}
	\d \widetilde{\mathcal{X}}_t=\sqrt{1-\widetilde{\mathcal{X}}^2_t}\d \widetilde \beta_t-\frac{d-1}{2}\widetilde{\mathcal{X}}_t\d t,\qquad \widetilde{\mathcal{X}}_0=\langle Z_0,w\rangle,
\end{equation}
where $\widetilde \beta=-\beta$ is a standard scalar BM. 
This simple (and in our context crucial) observation has
to the best of our knowledge not been made in the literature so far. 
Note that 
$\widetilde{\mathcal{X}}_t=\langle Z_t,w \rangle$. We henceforth fix  $w=\e{d}$ and obtain $\widetilde{\mathcal{X}}_t=Z^d_t$, 
making $\widetilde{\mathcal{X}}$ the  process considered in~\cite[Proposition 1.1]{projSphere} for $n=1$ and $\ell=d-1$. 
A linear transformation 
$\mathcal{X}_t:=\frac{1-\widetilde{\mathcal{X}}_t}{2}$ 
yields the SDE 
\begin{equation} \label{WFproj}
\d \mathcal{X}_t=\sqrt{\mathcal{X}_t(1-\mathcal{X}_t)}\d  \beta_t+\left(\frac{d-1}{4}(1-\mathcal{X}_t)-\frac{d-1}{4}\mathcal{X}_t\right)\d t,\qquad \mathcal{X}_0=\frac{1-Z_0^d}{2},
\end{equation} 
making $\mathcal{X}$ 
a Wright-Fisher diffusion with mutation parameters $\theta_1=\frac{d-1}{2}=\theta_2$, see~\eqref{def:WFdiff} below.

A weak form of the skew-product decomposition (on the level of generators) of the BM on the sphere has been established 
in~\cite{PauRogSkew}, see also~\cite[p.~269]{ItoMcKean}. 
In order to give a path-wise skew-product representation of $Z$, note first  that for $d\ge3$ the Wright-Fisher diffusion $\mathcal{X}$ visits
neither $0$ nor $1$. We may thus introduce the following time-changes:  for
$0\le s \le t$ define $S_s(t):=
\int_s^t\frac{1}{4\mathcal{X}_u(1-\mathcal{X}_u)}\d u$, satisfying
$\lim\limits_{t\to \infty}S_s(t)=\infty$ (see proof of Proposition~\ref{skewProd} below), 
and its inverse $T_s\colon [0,\infty) \to [s,\infty)$.  
We now state our main result.

\begin{prop}[Skew-product decomposition of the BM on $\Sp^{d-1}$]\label{skewProd}
Let $d\ge3$ and $Z$ be a solution of SDE~\eqref{BMsphere}. 
Pick $s\in[0,\infty)$
and assume
that either $s>0$ or $s=0$ and $Z_0\neq \e{d}$. Let $U=(Z^1,\ldots,Z^{d-1})^\top$ denote the first $d-1$ components of $Z$. Then 
$\abs{U}=2\sqrt{\mathcal{X}(1-\mathcal{X})}$ 
and the process
$\widehat{V}=(\widehat{V}_t)_{t\geq0}$, given by $\widehat{V}_t:=U_{T_s(t)}/\abs{U_{T_s(t)}}$ 
is a BM on the sphere $\Sp^{d-2}$, started at $\widehat{V}_0=U_s/\abs{U_s}$, independent of $\mathcal{X}$. 
Hence we obtain the skew-product decomposition
$Z_t=(2\sqrt{\mathcal{X}_t(1-\mathcal{X}_t)}\widehat{V}_{S_s(t)}^\top,1-2\mathcal{X}_t)^\top$ for $t\ge s$. Furthermore,
if $Z_0 = \e{d}$ (i.e. $U_0=0$), then $\widehat{V}_t$ is uniformly distributed on $\Sp^{d-2}$ for
any $t>0$ and subsequently evolves as a stationary BM on the sphere. 
\end{prop}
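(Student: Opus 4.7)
The plan is to follow a four-step strategy analogous to the proof in~\cite{projSphere}. \textbf{First}, Itô's formula applied to the $d$-th component of~\eqref{BMsphere} yields
\begin{equation*}
dZ_t^d = -Z_t^d U_t^\top\, dB_t^{1:d-1} + (1-(Z_t^d)^2)\, dB_t^d - \tfrac{d-1}{2}Z_t^d\, dt,
\end{equation*}
with $d\langle Z^d\rangle_t = (1-(Z_t^d)^2)\, dt$. Lévy's characterization produces a scalar BM $\widetilde\beta$ such that $Z^d$ solves~\eqref{eqn:tildeX}, and the affine transformation $\mathcal{X}=(1-Z^d)/2$ delivers~\eqref{WFproj}. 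The constraint $|Z|=1$ gives $|U|^2=1-(Z^d)^2=4\mathcal{X}(1-\mathcal{X})$. For $d\geq 3$ the Wright--Fisher parameters satisfy $(d-1)/2\geq 1$, making $\{0,1\}$ entrance boundaries of $\mathcal{X}$, so $S_s(t)$ is finite; positive recurrence of $\mathcal{X}$ in $(0,1)$ (its stationary law is Beta$((d{-}1)/2,(d{-}1)/2)$) then forces $S_s(t)\to\infty$, making $T_s\colon[0,\infty)\to[s,\infty)$ well-defined.

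\textbf{Second}, restricting~\eqref{BMsphere} to the first $d-1$ coordinates gives
\begin{equation*}
dU_t=(I_{d-1}-U_tU_t^\top)\, dB_t^{1:d-1}-Z_t^d U_t\, dB_t^d-\tfrac{d-1}{2}U_t\, dt,
\end{equation*}
and Itô applied to $V_t=U_t/|U_t|$ (using $|U|^2=1-(Z^d)^2$ to compute $d(1/|U|)$ from $dZ^d$) collapses, after cancellation of several terms, to
\begin{equation*}
dV_t=\frac{1}{|U_t|}(I_{d-1}-V_tV_t^\top)\, d\widehat{B}_t-\frac{d-2}{2|U_t|^2}V_t\, dt,
\end{equation*}
with $\widehat{B}$ a $(d-1)$-dimensional standard BM obtained from the martingale part of $V$ by Lévy's characterization. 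Because $dT_s(t)/dt=|U_{T_s(t)}|^2=4\mathcal{X}_{T_s(t)}(1-\mathcal{X}_{T_s(t)})$, the factors $1/|U|$ and $1/|U|^2$ are exactly the Jacobians needed so that $\widehat{V}_t=V_{T_s(t)}$ satisfies~\eqref{BMsphere} in dimension $d-1$; strong uniqueness then identifies $\widehat{V}$ as Brownian motion on $\Sp^{d-2}$.

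\textbf{Third}, the term $-Z^d U\, dt$ appearing in $d\langle Z^d,U\rangle$ from the $dB^d$ coupling is exactly cancelled by the normalisation correction when passing from $U$ to $V$, yielding $d\langle Z^d,V\rangle_t=0$. Knight's theorem (in its multidimensional form) then extends the scalar BM driving $\mathcal{X}$ and the $(d-1)$-dimensional BM driving $\widehat{V}$ to independent Brownian motions, and hence $\widehat{V}$ and $\mathcal{X}$ are independent as processes.

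\textbf{Fourth}, if $Z_0=\e{d}$ then~\eqref{BMsphere} and its initial datum are jointly invariant under the subgroup $O(d-1)\subset O(d)$ acting on the first $d-1$ coordinates; by weak uniqueness the law of $U_t$ is $O(d-1)$-invariant for every $t>0$, whence $V_t$ is uniform on $\Sp^{d-2}$. Applying the first three steps with any $s>0$ then identifies $\widehat{V}$ as a BM on $\Sp^{d-2}$ started from the uniform (stationary) law, hence stationary. The main obstacle is the Itô computation in the second step: one must verify that the first-order corrections from the nonlinear map $u\mapsto u/|u|$ combine with the quadratic-variation corrections from the $dB^d$ coupling to produce exactly $1/|U|$ and $1/|U|^2$ times the Stroock coefficients of~\eqref{BMsphere} in dimension $d-1$. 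This algebraic identity simultaneously pins down the correct clock $S_s$ and delivers the vanishing covariation needed for independence.
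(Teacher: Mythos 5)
Your Steps 1, 2 and 4 are essentially correct and match the paper's route (Itô on $Z^d$, the normalisation $V=U/|U|$, the time-change, and $O(d-1)$-invariance for the north-pole start). The genuine gap is in Step 3, the independence argument.

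The cancellation you observe, $d\langle Z^d,V\rangle_t=0$, is correct and is equivalent to the algebraic identity $(I-V_tV_t^\top)V_t=0$. But this orthogonality does \emph{not} feed directly into Knight's theorem, for two related reasons. First, in Step~2 you claim to obtain a $(d-1)$-dimensional BM $\widehat B$ ``from the martingale part of $V$ by L\'evy's characterization''; this is not available, since that martingale part has quadratic covariation $|U_t|^{-2}(I-V_tV_t^\top)\,\d t$, which is of rank $d-2$ rather than a scalar multiple of $I_{d-1}$. L\'evy's characterization cannot manufacture a full-rank BM from a degenerate local martingale without adjoining independent randomness in the missing direction. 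Second, and more seriously, the obvious candidate driving BM — namely $\widetilde W_t=\int_s^{T_s(t)}|U_u|^{-1}\,\d\widetilde B_u$, with $\widetilde B=(B^1,\dots,B^{d-1})^\top$, which is a genuine $(d-1)$-dimensional BM and does drive $\widehat V$ after time-change — is \emph{not} orthogonal to the time-changed $\beta$: writing $\eta_t=\beta_{T_s(t)}-\beta_s$ one finds $\langle \widetilde W^i,\eta\rangle_t=\int_s^{T_s(t)}|U_u|^{-1}(1-2\mathcal X_u)V^i_u\,\d u$, which is generically nonzero. So if you apply Knight to $(\widetilde W,\eta)$ the hypothesis fails, while if you try to apply it to the martingale part of $V$ itself the components are not mutually orthogonal and cannot be made into a $(d-1)$-dimensional BM by a single time-change.

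The paper resolves this by introducing an auxiliary scalar $(\mathcal F_t)$-BM $\xi$, independent of $B$, and defining
\begin{equation*}
W_t=\int_s^{T_s(t)}\frac{1}{|U_u|}(I-V_uV_u^\top)\,\d\widetilde B_u+\int_s^{T_s(t)}\frac{V_u}{|U_u|}\,\d\xi_u,
\end{equation*}
which \emph{is} a standard $(\mathcal G_t)$-BM ($\mathcal G_t=\mathcal F_{T_s(t)}$), still drives $\widehat V$ (because the SDE only uses $(I-\widehat V\widehat V^\top)\,\d W$, and the $\xi$-term lies in the kernel of that projection), and is orthogonal to $\eta$ precisely because $(I-V_uV_u^\top)V_u=0$. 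You also do not address the fact that $W$ and $\beta$ run on different time-scales: one must first argue that the $(\mathcal G_t)$-BM $W$, being started at $0$, is independent of $\mathcal G_0=\mathcal F_s$ (hence of $(\beta_u)_{u\le s}$), and only then apply Knight to $W$ and $\eta$ to obtain independence from $(\beta_{s+u}-\beta_s)_{u\ge0}$. Without the auxiliary randomisation and this two-stage argument, your Step 3 does not go through.
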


Since time changing a stationary process by an independent time-change does not
affect the marginal distributions of the process, the second part of the theorem immediately yields 
Corollary~\ref{corSim}, which in turn justifies Algorithm~\ref{simSpB} for the exact simulation 
of the increments of  BM on the sphere $\Sp^{d-1}$.

\begin{cor} \label{corSim}
Let $d\ge3$ and $Z$ be the BM on $\Sp^{d-1}$ started at 
$Z_0 = \e{d}$. Additionally, let $\mathcal{W}$ be the unique strong solution of
SDE~\eqref{def:WFdiff} with mutation parameters $\theta_1=\frac{d-1}{2}=\theta_2$, started at
$\mathcal{W}_0=0$,
and $Y$ a uniformly distributed random vector on $\Sp^{d-2}$,
independent of $\mathcal{W}$. Then for each $t\ge0$, the  random vectors $Z_t$ and
$(2\sqrt{\mathcal{W}_t(1-\mathcal{W}_t)}Y^\top,1-2\mathcal{W}_t)^\top$ are identically distributed.  
\end{cor}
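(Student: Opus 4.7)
The plan is to apply Proposition~\ref{skewProd} directly with $s=0$ and $Z_0=\e{d}$, and then argue that the angular component, when evaluated at the (random) time $S_0(t)$, remains uniform on $\Sp^{d-2}$ and independent of the radial part.

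First, I would invoke Proposition~\ref{skewProd} in the degenerate case $s=0$, $Z_0=\e{d}$. This gives the representation
\[
Z_t = \bigl(2\sqrt{\mathcal{X}_t(1-\mathcal{X}_t)}\,\widehat{V}_{S_0(t)}^\top,\ 1-2\mathcal{X}_t\bigr)^\top,
\]
where $\mathcal{X}$ satisfies SDE~\eqref{WFproj} with $\mathcal{X}_0=(1-Z_0^d)/2=0$, and where $\widehat{V}$ is a stationary BM on $\Sp^{d-2}$ independent of $\mathcal{X}$ (so $\widehat{V}_r$ is uniform on $\Sp^{d-2}$ for every $r\geq 0$). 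Next, comparing SDE~\eqref{WFproj} with the definition of the Wright-Fisher diffusion~\eqref{def:WFdiff} with parameters $\theta_1=\theta_2=(d-1)/2$, both started at $0$, and invoking strong uniqueness, one obtains $\mathcal{X}\stackrel{d}{=}\mathcal{W}$, so in particular $\mathcal{X}_t\stackrel{d}{=}\mathcal{W}_t$.

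The main point is then to identify the joint law of $(\mathcal{X}_t,\widehat{V}_{S_0(t)})$. The time $S_0(t)$ is a measurable functional of the trajectory of $\mathcal{X}$, hence is $\sigma(\mathcal{X})$-measurable. Using the independence of $\widehat{V}$ and $\mathcal{X}$ together with a regular conditional distribution argument, for any bounded measurable $f\colon\Sp^{d-2}\to\RR$ and $g\colon[0,1]\to\RR$,
\[
\E\bigl[f(\widehat{V}_{S_0(t)})\,g(\mathcal{X}_t)\bigr]
= \E\!\left[g(\mathcal{X}_t)\,\E\bigl[f(\widehat{V}_{S_0(t)})\,\big|\,\mathcal{X}\bigr]\right].
\]
Conditional on $\mathcal{X}$, the process $\widehat{V}$ is a stationary BM on $\Sp^{d-2}$ evaluated at the deterministic time $S_0(t)$, so its law is uniform on $\Sp^{d-2}$, independently of the value of $S_0(t)$. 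Thus $\E[f(\widehat{V}_{S_0(t)})\,|\,\mathcal{X}] = \int_{\Sp^{d-2}} f\,\mathrm{d}\sigma$, where $\sigma$ is the normalized surface measure. Substituting back shows that $\widehat{V}_{S_0(t)}$ is uniform on $\Sp^{d-2}$ and independent of $\mathcal{X}_t$.

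Finally, applying the continuous map $(x,y)\mapsto (2\sqrt{x(1-x)}y^\top,1-2x)^\top$ to the pair of independent components $(\mathcal{X}_t,\widehat{V}_{S_0(t)})\stackrel{d}{=}(\mathcal{W}_t,Y)$ yields the claimed equality in distribution for $Z_t$. The only delicate step is the conditional argument for the time-changed angular component; everything else is a direct specialization of Proposition~\ref{skewProd} and an identification of the SDE satisfied by $\mathcal{X}$.
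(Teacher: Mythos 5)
Your overall strategy matches the paper's: the paper's proof is the one-line observation that ``time changing a stationary process by an independent time-change does not affect the marginal distributions of the process,'' and your conditional-expectation computation is exactly the detailed justification of that observation, followed by identification of $\mathcal{X}$ with the Wright--Fisher diffusion $\mathcal{W}$. So the route is the same; you have simply unpacked the paper's phrase into a regular-conditional-distribution argument, which is a fine thing to do.

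There is, however, a genuine gap in the first step. You ``invoke Proposition~\ref{skewProd} in the degenerate case $s=0$, $Z_0=\e{d}$'' -- but this is precisely the combination that the proposition's hypothesis excludes (``assume that either $s>0$ or $s=0$ and $Z_0\neq\e{d}$''). The exclusion is not cosmetic: when $U_0=0$ the initial value $\widehat{V}_0=U_0/|U_0|$ is not defined, and $S_0(t)=\int_0^t\frac{1}{4\mathcal{X}_u(1-\mathcal{X}_u)}\,\d u$ has a non-integrable singularity at $u=0$ since $\mathcal{X}_0=0$; this divergence is exactly the ``rapid spinning'' phenomenon the paper emphasizes, and it is what makes the angular position uniform in the first place. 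So the object $\widehat{V}_{S_0(t)}$ that your calculation manipulates is not a well-defined random variable, and the conditioning step would be operating on something meaningless. The fix is straightforward and is what the paper implicitly does: for a fixed $t>0$, choose any $s\in(0,t)$, apply Proposition~\ref{skewProd} with that $s$ (now admissible) to get $Z_t=(2\sqrt{\mathcal{X}_t(1-\mathcal{X}_t)}\,\widehat{V}_{S_s(t)}^\top,1-2\mathcal{X}_t)^\top$ with $\widehat{V}$ a stationary BM on $\Sp^{d-2}$ independent of $\mathcal{X}$ and $S_s(t)<\infty$ a.s., and then run your conditional argument on $\widehat{V}_{S_s(t)}$. (The case $t=0$ is trivial as $Z_0=\e{d}$ and $\mathcal{W}_0=0$.) With that substitution your proof is correct.
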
 

It may appear that the skew-product decomposition in
Proposition~\ref{skewProd} can be applied directly to simulate the increments of the spherical BM 
started at any point in $\Sp^{d-1}$. Since the increment of the Wright-Fisher diffusion can be simulated
exactly using Algorithm~\ref{simWF}  below,
Proposition~\ref{skewProd}  
appears to reduce the problem to the simulation of the increment of
the BM on $\Sp^{d-2}$.
Recursively,
this would reduce the problem 
to the simulation of the BM 
on $\Sp^1$, where the algorithm is trivial.  
Unfortunately, for this
direct approach to work we would need to obtain a sample from  law of the pair
$(\mathcal{X}_t,\int_{0}^{t}\frac{1}{4\mathcal{X}_u(1-\mathcal{X}_u)}\d u)$, 
which at the time of writing we do not know how to do. 
As stated in Corollary~\ref{corSim},
this problem disappears when the BM $Z$ starts at the north pole $\e{d}$. 
Since for any orthogonal matrix $A\in \mathbb{R}^d\otimes\mathbb{R}^d$
the process $AZ$ solves the SDE in~\eqref{BMsphere} started at $AZ_0$, 
Corollary~\ref{corSim} 
and the orthogonal matrix $O(z)$ in Algorithm~\ref{simSpB}  
circumvent the need to simulate from the
law of the pair 
$(\mathcal{X}_t,\int_{0}^{t}\frac{1}{4\mathcal{X}_u(1-\mathcal{X}_u)}\d u)$ 
for an arbitrary starting point $z\in\Sp^{d-1}$.

\begin{remark}
It is not difficult to see that 
the Brownian motion $\widetilde Z=(\widetilde Z_t)_{t\geq0}$ on the sphere of radius $R>0$ in $\mathbb{R}^d$ satisfies the following SDE with a unique strong solution
(see e.g.~\cite[Lem.~3.6(e)]{elldiff}):
\begin{equation*} 
\d \widetilde Z_t=(I-\widetilde Z_t\widetilde Z_t^\top/|\widetilde Z_t|^2) \d B_t-\frac{d-1}{2}\widetilde Z_t/|\widetilde Z_t|^2 \d t,\qquad |\widetilde Z_0|=R. 
\end{equation*}
Thus we may define the BM $\widetilde Z$ by  $\widetilde Z_t=RZ_{t/R^2}$, where $Z$ is the BM on $\Sp^{d-1}$ satisfying the SDE in~\eqref{BMsphere},
and Algorithm~\ref{simSpB} can be applied to produce the exact sample of the increments of $\widetilde Z$. 
\end{remark}

\begin{proof}[Proof of Proposition~\ref{skewProd}]
In order to apply~\cite[Thm~1.5]{projSphere},
let $n=d-1$, $\ell=1$ and set $\gamma\equiv 1$, $g\equiv \frac{d-1}{2}$. Then the
process $U$ in Proposition~\ref{skewProd} equals the process $X$
considered in~\cite[Thm~1.5]{projSphere}. In particular, 
the processes $\abs{U}$ and $R$ are equal implying that the 
time-changes $S_s(t)$ and $T_s(t)$ used in both Proposition~\ref{skewProd}
and \cite[Thm~1.5]{projSphere} coincide.  Moreover, since \cite[Lem.~2.3]{projSphere} holds in the current setting the time-changes are 
well-defined. We finally
note that $\abs{U}=2\sqrt{\mathcal{X}(1-\mathcal{X})}.$

As mentioned in the introduction, the only thing left to prove is the
independence of $\widehat{V}$ and $\mathcal{X}=(1-Z^d)/2$. The argument is analogous to the 
one that yielded 
the independence of
$\widehat{V}$ and $R$ in the proof of \cite[Thm~1.5]{projSphere}, but the statement does not follow directly from~\cite[Thm~1.5]{projSphere}.  
Let $V:=U/\abs{U}$ and note that, 
as in the proof of~\cite[Thm~1.5]{projSphere},
It\^o's formula applied to $V$
yields the following
$\d V_t= \abs{U_t}^{-1}(I-V_{t}V^\top_{t})\d
\widetilde{B}_t -{\abs{U_t}^{-2}}\frac{d-2}{2}V_{t}\d t,$ where
$\widetilde{B}=(B^1,\ldots,B^{d-1})^\top$. 
By~\cite[Prop.~1.1]{projSphere} (with $n=1, \ell=d-1$) we have that 
$\widetilde{\mathcal{X}}=Z^d$ satisfies SDE~\eqref{eqn:tildeX} above with
$\widetilde{\mathcal{X}}_0=Z^d_0$. Moreover, as in the proof of~\cite[Prop.~1.1]{projSphere}, the scalar BM $\widetilde{\beta}$ in~\eqref{eqn:tildeX} is given by
$\d \widetilde{\beta}_t=-(1-2\mathcal{X}_t)V^\top_t\d
\widetilde{B}_t+\abs{U_t}\d B_t^d$. Consequently, $\mathcal{X}$ satisfies
SDE~\eqref{WFproj} with $\mathcal{X}_0=\frac{1-Z^d_0}{2}$ and the scalar BM
$\beta=-\widetilde{\beta}$. 
Since
$\widehat V=V_{T_s(\cdot)}$,
the change-of-time formulae~\cite[Ch.~V, \S1]{revuzyor} for the It\^o and the
Lebesgue-Stieltjes integrals imply that
$\widehat{V}_t-\widehat{V}_0=\int_0^t(I-\widehat{V}_u\widehat{V}_u^\top)\d
\widetilde{W}_u-\int_0^t\frac{d-2}{2}\widehat{V}_u\d u$, 
where 
$\widetilde{W}_t =  \int_s^{T_s(t)} \abs{U_u}^{-1}\d \widetilde{B}_u$ is a standard BM on $\mathbb{R}^{d-1}$ by Levy's
characterization. This implies that $\widehat{V}$ solves the SDE in~\eqref{BMsphere} in dimension $d-1$,
making it a BM on
$\Sp^{d-2}$. 

In order to 
conclude that $\mathcal{X}$ and
$\widehat{V}$ are independent, 
we assume without loss of generality that the underlying probability space with filtration $(\mathcal{F}_t)_{t\geq0}$
supports a further scalar $(\mathcal{F}_t)$-BM 
$\xi$, independent of $B$. 
Define the continuous local martingale $W=(W_t)_{t\geq0}$ in $\mathbb{R}^{d-1}$ 
by
$$W_t =  \int_s^{T_s(t)} \frac{1}{\abs{U_u}}(I-V_{u}V^\top_{u})\d \widetilde{B}_u
+\int_s^{T_s(t)}\frac{V_u}{\abs{U_u}} \d \xi_u.$$ 
Since 
$(I-V_{u}V^\top_{u})(I-V_{u}V^\top_{u})=I-V_{u}V^\top_{u}$,
it follows easily from the independence of $B$ and $\xi$ that 
the components of its quadratic covariation $W$ are given by
$\langle W^i,W^j \rangle_t=\int_s^{T_s(t)}\delta_{ij}{\abs{U_u}^{-2}}\d u=\delta_{ij}t$
for any $i,j\in\{1,\ldots,d-1\}$.
Hence, by Levy's characterisation, the process
$W$ is a standard $(\mathcal{G}_t)$-BM, where the filtration $(\mathcal{G}_t)_{t\geq0}$ is defined by  
$\mathcal{G}_t:=\F_{T_s(t)}$ for all $t\geq0$. 
Moreover, since
$$(I-V_uV_u^\top)\cdot \begin{bmatrix}\frac{1}{\abs{U_u}}(I-V_{u}V^\top_{u}) ,&
\frac{V_u}{\abs{U_u}}
\end{bmatrix}=\begin{bmatrix}\frac{1}{\abs{U_u}}(I-V_{u}V^\top_{u}),& 0
\end{bmatrix}$$ 
we can apply the change-of-time formulae for the stochastic and
Lebesgue-Stieltjes integral~\cite[Ch.~V, \S1]{revuzyor} to obtain 
$\widehat{V}_t-\widehat{V}_0=\int_0^t(I-\widehat{V}_u\widehat{V}_u^\top)\d W_u-\int_0^t\frac{d-2}{2}\widehat{V}_u\d u$.

Since SDE in~\eqref{BMsphere} in dimension $d-1$
has a unique strong solution,
the independence of $\widehat{V}$ 
and $\mathcal{X}$ 
follows from the independence of the driving BMs $W$ and $\beta$. 
Since $W$ and $\beta$ 
run on different time scales, we first note that the 
Markov property of $W$ implies that $W$
depends on $\mathcal{G}_0=\F_s$ only
through its position at time zero, i.e. $W_0=0$, making it independent of $\F_s$. 
In particular,
$W$ is independent of
$(\beta_{t})_{t\in [0,s]}$. Therefore it is sufficient to prove that $W$ is
independent of $(\beta_{u+s}-\beta_s)_{u\ge 0}$. For any $t\in[0,\infty)$, define
$\eta_t:=\beta_{T_s(t)}-\beta_s =\int_s^{T_s(t)} (1-2\mathcal{X}_u)V^\top_u\d \widetilde{B}_u-\int_s^{T_s(t)} \abs{U_u}\d B^d_u$ so that
$\eta$ is a $(\mathcal{G}_t)$-local martingale. Simple calculations show that
$\langle W^i,\eta \rangle_t=0$ for any component $W^i$ of $W$ and the quadratic variation $\langle \eta\rangle_t=T_s(t)-s$ satisfies
$\langle\eta\rangle_{S_s(u+ s)}=u$ for all $u\geq0$.  By Knight's Theorem (also known as the multidimensional
Dambis-Dubins-Schwarz Theorem~\cite[Ch.~V, Thm~1.9]{revuzyor}) we have 
that $W$ and $(\eta_{S_s(t+s)})_{t\ge 0}=(\beta_{s+t}-\beta_s)_{t \ge 0}$ are
independent BMs. This concludes the proof of the proposition. 
\end{proof}

\appendix 
\section{Algorithms for simulation of Wright-Fisher diffusion}
\label{appendix:alg} 
We recall the algorithm 
for the exact simulation of the marginals of Wright-Fisher diffusions.
This appendix is taken from~\cite[Section 2]{exsimWF}.
Fix  parameters $\theta_1,\theta_2\geq0$ 
and consider 
the SDE 
\begin{equation} \label{def:WFdiff}
\d
\mathcal{W}_t=\sqrt{\mathcal{W}_t(1-\mathcal{W}_t)}\d B_t +
\frac{1}{2}(\theta_1(1-\mathcal{W}_t)-\theta_2 \mathcal{W}_t)\d t, \qquad \mathcal{W}_0=x\in[0,1].
\end{equation} 
Its unique strong solution is know as the 
Wright-Fisher diffusion with mutation parameters $\theta_1$ and $\theta_2$.\footnote{This 
notation coincides with the one used in \cite{exsimWF} but differs from the one in~\cite{projSphere} by a factor of $2$.} 

\begin{algorithm}\caption{Exact simulation from the law $\mathrm{WF}_{x,t}(\theta_1,\theta_2)$}  \label{simWF}
	\begin{algorithmic}[1]
		\Require Mutation parameters $\theta_1$ and $\theta_2$, starting point $x\in[0,1]$ and time horizon $t>0$
		\State Simulate $M\overset{d}{=} A^\theta_\infty(t)$ \label{stepAinf} \Comment Use \cite[Alg.~2]{exsimWF}
		\State Simulate $L\sim \mathrm{Binomial}(M,x)$
		\State Simulate $Y\sim \mathrm{Beta}(\theta_1+L,\theta_2+M-L)$
		\State \textbf{return} $Y$
	\end{algorithmic}
\end{algorithm} 

The random variable $A^\theta_\infty(t)$ in step~\ref{stepAinf} is
integer-valued with the mass function
$\{q^\theta_m(t);m=0,1,\ldots\}$, where $\theta=\theta_1+\theta_2$, 
that can be described as follows.
Let $\{A^\theta_n(t);t\ge0\}$ be a pure
death process on the non-negative integers, started at $A^\theta_n(0)=n$, where the only transitions are of the form
$m\mapsto m-1$ and occur at rate $m(m+\theta-1)/2$ for each $m\in\{1,\ldots, n\}$. 
Then $q^\theta_m(t)=\P[A^\theta_\infty(t)=m]$ can be expressed as the limit 
$q^\theta_m(t)=\lim_{n\to \infty}\P[A^\theta_n(t)=m]$.  
The coefficients $q^\theta_m(t)$ are known in terms of an  alternating
series~\cite[Eq.~(5)]{exsimWF}, with summands whose absolute values are ultimately monotonically decreasing. 
This property is exploited in \cite[Alg.~2]{exsimWF} for the simulation of $A^\theta_\infty(t)$.
In theory, \cite[Alg.~2]{exsimWF} is exact for all
values of parameters. In practice, 
when the time horizon $t$ is very small, the algorithm runs into the  
running time and 
floating number precision problems, as it  
requires the  multiplication of very
large and very small numbers. 
It is noted in~\cite{exsimWF} that 
\cite[Alg.~2]{exsimWF} works well 
for  $t\geq0.05$ (the values of $\theta_1,\theta_2$ do not seem to
affect the performance). For $t<0.05$, a normal approximation
for $A^\theta_\infty(t)$ can be used, see~\cite[Thm~1]{exsimWF}. This yields an efficient 
approximate simulation algorithm for the law $\mathrm{WF}_{x,t}(\theta_1,\theta_2)$.

\section{Brownian motion on projective spaces}\label{BMproj}
Pick a field $\mathbb{F}\in\{\mathbb{R},\mathbb{C},\mathbb{H}\}$, where $\mathbb{C}$ are the complex numbers and $\mathbb{H}$ denotes the quaternions, 
consider $\mathbb{F}^{n+1}$ as an $(n+1)$-dimensional vector space over $\mathbb{F}$
and recall that the $n$-dimensional projective space $\mathbb{F}\mathbb{P}^n$ is defined as a space
of all $1$-dimensional subspaces of $\mathbb{F}^{n+1}$. 
More precisely, define  $\mathbb{F}\mathbb{P}^n$ to be the
set of all equivalence classes  $[x_0:\cdots:x_n]$, 
where the $(n+1)$-tuple $(x_0,\ldots,x_n)^\top$ is in $\mathbb{F}^{n+1}\backslash\{0\}$ and 
$[x_0:\cdots:x_n]=[y_0:\cdots:y_n]$ if and only if there exists a scalar
$\lambda\in\mathbb{F}\backslash\{0\}$ such that $x_i=\lambda y_i$ for each $i\in\{0,\ldots,n\}$. 
For any $x\in\mathbb{F}^{n+1}\backslash\{0\}$, 
$[x]:=[x_0:\cdots:x_n]$ denotes  the \textit{homogeneous coordinates} 
of the corresponding point in $\mathbb{F}\mathbb{P}^n$.

Let
$\pi\colon \mathbb{F}^{n+1}\backslash\{0\}\to \mathbb{F}\mathbb{P}^n$ be 
the quotient map given by $\pi(x):=[x]$.
$\mathbb{F}\mathbb{P}^n$ 
is topologised 
by the quotient topology i.e. a subset
$U\subseteq\mathbb{F}\mathbb{P}^n$ is open if and only if $\pi^{-1}(U)$ is open in
$\mathbb{F}^{n+1}\backslash\{0\}$. 
It is easy to see that $\mathbb{F}\mathbb{P}^n$ is 
Hausdorff, second-countable and compact since the restriction of $\pi$ 
to the sphere 
$\Sp^{n}$ (resp. $\Sp^{2n+1}$, $\Sp^{4n+3}$) if
$\mathbb{F}=\mathbb{R}$ (resp. $\mathbb{C}$, $\mathbb{H}$),
is surjective. 
Moreover, $\mathbb{F}\mathbb{P}^n$ is a  smooth manifold. Indeed, for any $i\in\{0,\ldots,n\}$,  let 
$\widetilde{U_i}=\set{x\in\mathbb{F}^{n+1}}{x_i\neq0}$ and note that
$\widetilde{U_i}=\pi^{-1}(\pi(\widetilde{U_i}))$, making the  set
$U_i=\pi(\widetilde{U_i})$ open in $\mathbb{F}\mathbb{P}^n$. Define the
chart $\varphi_i\colon U_i\to\mathbb{F}^{n}$  by 
$\varphi_i([x])=(\frac{x_0}{x_i},\ldots,\frac{x_{i-1}}{x_i},\frac{x_{i+1}}{x_i},\ldots,\frac{x_n}{x_i})^\top$. 
The map $\varphi_i$ is well-defined (since it assigns the same value to each $(n+1)$-tuple in a given equivalence class in $U_i$) 
and a homeomorphism 
with the inverse 
$\varphi_i^{-1}(y_1,\ldots,y_n)=[y_1:\cdots:y_{i-1}:1:y_i:\cdots:y_n]$. 
The charts $(U_i,\varphi_i)$, $i\in\{0,\ldots,n\}$ are smoothly compatible. 
Thus
$\mathbb{F}\mathbb{P}^n$ is a compact smooth manifold of dimension $n$ (resp. $2n$, $4n$) if
$\mathbb{F}=\mathbb{R}$ (resp. $\mathbb{C}$, $\mathbb{H}$). 

An equivalent way of defining the smooth structure on $\mathbb{F}\mathbb{P}^n$
is via the action 
on $\Sp^{n}$ (resp. $\Sp^{2n+1}$, $\Sp^{4n+3}$) if
$\mathbb{F}=\mathbb{R}$ (resp. $\mathbb{C}$, $\mathbb{H}$)
of the multiplicative group of unit elements in $\mathbb{F}$,
see proof of Lemma~\ref{lem:FPnBM} below.
This 
action 
is free, proper and smooth, so by the Quotient Manifold Theorem
\cite[Thm~21.10]{leeIntro} there exists a unique smooth structure on the 
projective spaces
$\mathbb{R}\mathbb{P}^n=\Sp^{n}/\Sp^0$, $\mathbb{C}\mathbb{P}^n=\Sp^{2n+1}/\Sp^1$ and $\mathbb{H}\mathbb{P}^n=\Sp^{4n+3}/\Sp^3$ that makes 
the quotient projection $\pi$ from a sphere onto $\mathbb{F}\mathbb{P}^n$ a smooth submersion i.e.
its differential $d\pi_x$ is surjective at each point $x$ in the sphere. 

The spheres 
$\Sp^n$, $\Sp^{2n+1}$ and  $\Sp^{4n+3}$
are Riemannian manifolds with the respective metrics induced by the ambient Euclidean spaces
and 
the groups of unit elements 
$\Sp^0,\Sp^1$ and $\Sp^3$ act via isometries. Thus, the pushforward Riemannian metric on 
$\mathbb{F}\mathbb{P}^n$ is well-defined and 
turns the projection $\pi$ into  a Riemannian submersion. Put differently, 
the sphere 
$\Sp^n$ (resp. $\Sp^{2n+1}$, $\Sp^{4n+3}$) is a smooth fibre bundle over $\mathbb{R}\mathbb{P}^n$
(resp. $\mathbb{C}\mathbb{P}^n$, $\mathbb{H}\mathbb{P}^n$)
with the fibre diffeomorphic to 
$\Sp^0$ (resp. $\Sp^1$, $\Sp^3$)
and the differential
$d\pi_x$ is an isometry when restricted to the space of horizontal tangent vectors at any $x$ in the sphere and
the tangent space at $\pi(x)$ in the projective space.  
This statement is trivial when $\mathbb{F}=\mathbb{R}$,
since in that case we have $\Sp^0=\{-1,1\}$ and $\pi$ is a local diffeomorphism. 
The other cases are given in~\cite[Problem 3-8]{leeRiem}. If
$\mathbb{F}=\mathbb{C}$ this construction yields the well-known Fubini-Study metric on
$\mathbb{C}\mathbb{P}^n$ \cite[Ch.~7.1]{jostRiem}. 

The BM on $\mathbb{F}\mathbb{P}^n$ can be defined as a strong Markov process with the generator
equal to 
$\frac{1}{2}\Delta_{\mathbb{F}\mathbb{P}^n}$, where $\Delta_{\mathbb{F}\mathbb{P}^n}$ is the Laplace-Beltrami operator corresponding to 
the Riemannian metric on  
$\mathbb{F}\mathbb{P}^n$ \cite[p.74]{hsumanifolds}.
Finally, we establish a connection between the spherical BM and the BM on projective spaces.  

\begin{lemma}
\label{lem:FPnBM}
Let $n\ge1$ and let $Z$ be BM on sphere $\Sp^{n}$ (resp. $\Sp^{2n+1}$,
$\Sp^{4n+3}$) started at $z$. Then the process $\pi(Z)$ is the BM on
$\mathbb{R}\mathbb{P}^n$ (resp. $\mathbb{C}\mathbb{P}^n$,
$\mathbb{H}\mathbb{P}^n$) started at $\pi(z).$ 
\end{lemma}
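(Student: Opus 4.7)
The plan is to characterise $\pi(Z)$ as the unique Feller process on $\mathbb{F}\mathbb{P}^n$ with generator $\tfrac{1}{2}\Delta_{\mathbb{F}\mathbb{P}^n}$. Write $\mathbb{S}$ for the ambient sphere ($\Sp^{n}$, $\Sp^{2n+1}$ or $\Sp^{4n+3}$, depending on $\mathbb{F}$) and $\Delta_{\mathbb{S}}$ for its Laplace–Beltrami operator. Since BM on a compact Riemannian manifold is the unique (in law) continuous strong Markov process whose generator agrees with half the Laplace–Beltrami operator on the core $C^{\infty}$, it suffices to show that for every $f\in C^{\infty}(\mathbb{F}\mathbb{P}^n)$,
\[
f(\pi(Z_t)) - f(\pi(Z_0)) - \tfrac{1}{2}\int_0^t (\Delta_{\mathbb{F}\mathbb{P}^n} f)(\pi(Z_s))\,\d s
\]
is a local martingale. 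Applying the analogous martingale characterisation of the spherical BM $Z$ to the smooth pullback $\widetilde f := f\circ\pi\in C^{\infty}(\mathbb{S})$ already yields this, provided one has the intertwining identity
\begin{equation}\label{eqn:intertwine}
\Delta_{\mathbb{S}}(f\circ\pi)(z) = (\Delta_{\mathbb{F}\mathbb{P}^n} f)(\pi(z)) \qquad\text{for all } z\in \mathbb{S},\ f\in C^{\infty}(\mathbb{F}\mathbb{P}^n).
\end{equation}

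Proving \eqref{eqn:intertwine} is the geometric heart of the argument. Because $\pi$ is a Riemannian submersion, at every $z\in\mathbb{S}$ the tangent space decomposes orthogonally as $T_z\mathbb{S}=\mathcal{H}_z\oplus\mathcal{V}_z$, where $\mathcal{V}_z=\ker d\pi_z$, and $d\pi_z$ restricts to a linear isometry $\mathcal{H}_z\to T_{\pi(z)}\mathbb{F}\mathbb{P}^n$. Choosing a local orthonormal frame of horizontal and vertical fields near $z$ and expanding $\Delta_{\mathbb{S}}(f\circ\pi)$ as the trace of the Hessian, the vertical fields annihilate $f\circ\pi$ (which is constant on the fibers of $\pi$), so only horizontal second-order contributions survive. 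These assemble into $(\Delta_{\mathbb{F}\mathbb{P}^n}f)\circ\pi$ up to a correction of the form $-\mathcal{N}(f\circ\pi)$, where $\mathcal{N}$ denotes the mean-curvature vector field of the fibers. Now the fibers of $\pi$ are the orbits of the isometric actions of $\Sp^0$, $\Sp^1$, and $\Sp^3$ on the ambient sphere; concretely, they are antipodal pairs in $\Sp^{n}$, Hopf great circles in $\Sp^{2n+1}$, and Hopf great $3$-spheres in $\Sp^{4n+3}$. Each such fiber is a totally geodesic submanifold of $\mathbb{S}$, hence minimal, so $\mathcal{N}\equiv 0$ and \eqref{eqn:intertwine} follows.

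Combining the two steps, $\pi(Z)$ solves the martingale problem for $\tfrac{1}{2}\Delta_{\mathbb{F}\mathbb{P}^n}$ starting at $\pi(z)$, and by uniqueness it is BM on $\mathbb{F}\mathbb{P}^n$ started at $\pi(z)$. The main obstacle is the geometric identity \eqref{eqn:intertwine}, which one may alternatively cite as the standard result that a Riemannian submersion with minimal fibers intertwines the Laplace–Beltrami operators on basic functions; either way the lemma reduces to recognising that the three fibrations $\Sp^0\hookrightarrow\Sp^{n}\to\mathbb{R}\mathbb{P}^n$, $\Sp^1\hookrightarrow\Sp^{2n+1}\to\mathbb{C}\mathbb{P}^n$, and $\Sp^3\hookrightarrow\Sp^{4n+3}\to\mathbb{H}\mathbb{P}^n$ have totally geodesic fibers.
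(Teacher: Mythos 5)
Your proof is correct, and it arrives at the same geometric crux as the paper's (the fibres of $\pi$ are totally geodesic, hence minimal, hence have vanishing mean curvature), but by a different route. The paper simply invokes a ready-made result on Riemannian submersions and Brownian motion (Pauwels' theorem, cited as \cite[Thm~1]{pauwSubmer}), which says that the image of a manifold BM under a Riemannian submersion is a BM plus a drift proportional to $d\pi(H)$, where $H$ is the mean-curvature vector of the fibres; the lemma then follows immediately once one checks $H\equiv 0$. You instead re-derive the essential content of that theorem in the minimal-fibre case: you reduce to the intertwining identity $\Delta_{\mathbb{S}}(f\circ\pi)=(\Delta_{\mathbb{F}\mathbb{P}^n}f)\circ\pi$ for basic functions via the martingale characterisation of BM, then prove the identity by expanding the Laplacian in a horizontal/vertical frame and observing that the only uncompensated term is $-\mathcal{N}(f\circ\pi)$ with $\mathcal{N}$ the mean-curvature field. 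This is more self-contained and makes the mechanism visible, at the cost of some extra work; the paper's citation is shorter but opaque. One small point you leave as an assertion which the paper actually proves: that each fibre is totally geodesic. The paper shows this by noting that the standard fibre $\{z:z_1=\cdots=z_n=0\}$ (a great $\Sp^0$, $\Sp^1$ or $\Sp^3$) is totally geodesic, and that every other fibre is its image under an ambient isometry (an element of the special orthogonal group), hence also totally geodesic. You should add a line to this effect; otherwise the argument is complete.
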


\begin{proof}
Fibres of the Riemannian submersion $\pi$ are orbits in
$\Sp^{n}\subseteq\mathbb{R}^{n+1}$ (resp.
$\Sp^{2n+1}\subseteq\mathbb{C}^{n+1}$, $\Sp^{4n+3}\subseteq\mathbb{H}^{n+1}$)  of
the (right) group action $((z_0,\ldots,z_n),\lambda)\mapsto(z_0\lambda ,\ldots,z_n\lambda)$, 
where $(z_0,\ldots,z_n)^\top\in \Sp^n$ (resp.
$\Sp^{2n+1}$, $\Sp^{4n+3}$) and $\lambda$ is an element of multiplicative group of
unit elements in $\mathbb{R}$ (resp. $\mathbb{C}$, $\mathbb{H}$). This group is
isomorphic to $\Sp^0$ (resp. $\Sp^1,\Sp^3$) and isometric to the submanifold 
$\{(z_0,\ldots,z_n)^\top\in \Sp^n \ \text{ (resp. $\Sp^{2n+1}$, $\Sp^{4n+3}$); $z_1=\cdots=z_n=0$}\}$ 
in $\Sp^n$ (resp. $\Sp^{2n+1}$, $\Sp^{4n+3}$).
This submanifold is easily
seen to be totally geodesic i.e. any geodesic in the submanifold is also
a geodesic in the ambient manifold. Moreover, each fibre (i.e. orbit) can be isometrically mapped onto
this submanifold
by multiplication from the left by
suitable element of special orthogonal group (for any element $x$ in the fibre
pick any matrix $O(x)$ in the special orthogonal group such that
$O(x)x=(1,0,\ldots,0)^\top$). 
Hence all fibres are totally geodesic submanifolds. 

Let $(M,g)$ be a $k$-dimensional Riemannian submanifold of a Riemannian manifold
$(\widetilde{M},\widetilde{g})$, i.e. the metric $g$ is a restriction of $\widetilde g$ to the tangent bundle of $M$. 
For any two vector fields $X,Y$ on $M$, 
the second fundamental form $II$ is given by
$II(X,Y)=\widetilde{\Delta}_{\widetilde{X}} \widetilde{Y}-\Delta_X Y$, where
$\Delta$ (resp. $\widetilde{\Delta}$) represent the metric connections on $M$
(resp. $\widetilde{M}$) and the vector fields $\widetilde{X},\widetilde{Y}$ are
arbitrary extensions of the vector fields $X,Y$ to $\widetilde{M}$.
The second fundamental form is well-defined and takes values in
the normal bundle of the submanifold $M$  (see \cite[Thm~8.2]{leeRiem}).
By~\cite[Exercise~8.4]{leeRiem}, the submanifold $M$ is totally geodesic if and only if 
$II(X,Y)=0$ 
for any vector fields $X,Y$ on $M$. 
The mean curvature $H_x$ at
any $x\in M$ 
is proportional to the metric trace of $II$, i.e.
for any  orthonormal frame
$e_1,\ldots,e_k$ in the neighbourhood of $x$ we have 
$H_x=\frac{1}{k}\sum_{i=1}^{k}II(e_i,e_i)_x$. Clearly, $H_x$ is equal to
$0$ for each $x\in M$ if $M$ is totally geodesic in $\widetilde M$.  

By~\cite[Thm~1]{pauwSubmer}, the projection $\pi(Z)$ in 
$\mathbb{R}\mathbb{P}^n$ (resp.
$\mathbb{C}\mathbb{P}^n,\mathbb{H}\mathbb{P}^n$) 
of the BM $Z$ on $\Sp^{n}$ (resp. $\Sp^{2n+1}$,
$\Sp^{4n+3}$) 
is a BM with the drift given by 
$V_x=-\frac{n}{2}d\pi_x(H_x)$ (resp. $-\frac{2n}{2}
d\pi_x(H_x),-\frac{4n}{2} d\pi_x(H_x)$), 
where $H_x$ is the mean curvature of the fibre of $x$ in the ambient sphere.
Since all the fibres are totally geodesic, the drift vanishes and the lemma follows. 
\end{proof}

\section*{Acknowledgements}
AM and VM are supported by The Alan Turing Institute under the EPSRC grant EP/N510129/1;
AM supported by EPSRC grant EP/P003818/1 and the Turing Fellowship funded by the Programme on Data-Centric Engineering of Lloyd's Register Foundation;
GUB supported by CoNaCyT grant FC-2016-1946 and UNAM-DGAPA-PAPIIT grant IN115217.

\bibliographystyle{amsalpha}
\bibliography{source}

\end{document}